\newtheorem{theorem}{Theorem}[section]
\newtheorem{corollary}[theorem]{Corollary}
\newtheorem{lemma}[theorem]{Lemma}
\newtheorem{definition}[theorem]{Definition}
\newtheorem{*theorem}[theorem]{*Theorem}
\newtheorem{*corollary}[theorem]{*Corollary}
\newtheorem{*lemma}[theorem]{*Lemma}
\newtheorem{*proposition}[theorem]{*Proposition}
\def\Sph{{S^2}}
\def\real{\mathbb{R}}
\def\vertices{\operatorname{vertices}}
\begin{document}

\title{Embedding a pair of graphs in a surface, and the width of $4$-dimensional prismatoids}

\author{
Francisco Santos\thanks{Supported in part by the Spanish Ministry of Science through grant MTM2008-04699-C03-02},
Tamon Stephen\thanks{Supported in part by an NSERC Discovery Grant.}, and
Hugh Thomas\thanks{Supported in part by an NSERC Discovery Grant.}}
\date{}

\maketitle

\begin{abstract}
A \emph{prismatoid} is a polytope with all its vertices contained in two parallel facets, called its \emph{bases}. Its \emph{width} is the number of steps needed to go from one base to the other in the dual graph. The first author recently showed that the existence of counter-examples to the Hirsch conjecture is equivalent to that of $d$-prismatoids of width larger than $d$, and constructed such prismatoids in dimension five. Here we show that the same is impossible in dimension four. This is proved by looking at the pair of graph embeddings on a $2$-sphere that arise from the normal fans of the two bases of $Q$.
\end{abstract}

\section{Prismatoids and pairs of maps}

The first author recently constructed counter-examples to the Hirsch conjecture~\cite{Santos:Hirsch-counter} via the combination of the following two results:
\begin{itemize}
\item If a $d$-dimensional prismatoid $Q$ has width larger than $d$ then it is possible to construct from it a polytope $P$ that violates the Hirsch conjecture.
\item There exist $5$-prismatoids of width larger than 5.
\end{itemize}

Here, a \emph{prismatoid} is a polytope that has all its vertices lying in two parallel facets $Q^+$ and $Q^-$ (called its \emph{bases}), and its \emph{width} is the dual graph distance between those two facets. That is, the width of $Q$ is the minimum number of steps needed to go from $Q^+$ to $Q^-$, where a step consists in moving from a facet of $Q$ to an adjacent facet.

The dimension of the non-Hirsch polytope $P$ in the construction is much higher than that of $Q$ (it equals $|\vertices(Q)| - |\dim(Q)|$) but still the question arose whether the dimension $d=5$ for $Q$ is the smallest possible. An easy argument shows $d=3$ cannot work, and $d=4$ was left open in~\cite{Santos:Hirsch-counter}. In this paper we answer this question in the negative:

\begin{theorem}
\label{thm:4prismatoid}
The width of a $4$-dimensional prismatoid is at most four.
\end{theorem}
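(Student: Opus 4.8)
The plan is to translate the statement about the dual graph of $Q$ first into the language of normal fans and then into a purely combinatorial–topological claim about two maps drawn on a $2$-sphere. Normalize $Q$ so that its two bases have outer normals $N:=e_4$ and $S:=-e_4$, and let $M:=\normal(Q)\cap S^3$ be the cell decomposition of $S^3$ dual to $\partial Q$. The facets of $Q$ are the vertices of $M$, the ridges are its edges, and the width of $Q$ is exactly the graph distance $\d(N,S)$ in the $1$-skeleton $M^{(1)}$. The prismatoid hypothesis enters through one structural fact: every vertex of $Q$ lies on $Q^+$ or on $Q^-$, so the normal cone of every vertex of $Q$ contains $N$ or $S$; equivalently $M=\overline{\operatorname{st}}(N)\cup\overline{\operatorname{st}}(S)$, i.e. the two closed stars already cover all of $S^3$. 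This is the quantitative sense in which the two bases are forced to be topologically close, and it is what I expect to drive the bound.

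The two relevant links are themselves the normal fans of the bases: $\lk(N)=G^+:=\normal(Q^+)\cap\Sph$ and $\lk(S)=G^-:=\normal(Q^-)\cap\Sph$, each a $3$-connected map on a $2$-sphere (the boundary complex of the polar of the corresponding $3$-dimensional base, by Steinitz). Radially projecting $S^3\setminus\{N,S\}$ to the equatorial sphere $E=\{x_4=0\}\cap S^3\cong\Sph$ lets me regard $G^+$ and $G^-$ as two maps drawn on one and the same $\Sph$; this is the pair of maps of the title. The neighbours of $N$ in $M^{(1)}$ are precisely the vertices of $G^+$ (the facets of $Q$ sharing a ridge with $Q^+$), and likewise the neighbours of $S$ are the vertices of $G^-$; since $Q^+$ and $Q^-$ are parallel they are never adjacent, so $\operatorname{width}(Q)=2+\d_M\!\bigl(V(G^+),V(G^-)\bigr)$. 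It therefore suffices to produce a facet $F^+$ adjacent to $Q^+$ and a facet $F^-$ adjacent to $Q^-$ with $\d_M(F^+,F^-)\le 2$, which is read off from the way the edges and faces of $G^+$ and $G^-$ meet in their superposition on $\Sph$.

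The heart of the argument is thus the following planar statement, which I would prove by Jordan-curve and Euler-characteristic methods: for any two maps $G^+,G^-$ in general position on $\Sph$, some vertex of $G^+$ and some vertex of $G^-$ have their facets at dual distance at most $2$. The mechanism is that on a $2$-sphere neither map can be shielded from the other: each is a connected complex filling all of $\Sph$, so an edge of $G^+$ leaving a prescribed vertex must meet the edge set of $G^-$, and the Jordan curve theorem controls how the two edge systems interleave, forcing an honest top-to-bottom ridge of $Q$ within two steps. I expect this to be the main obstacle, for two reasons. First, crossings in the superposition are not themselves facets of $Q$, so the delicate point is to convert a short overlay path into a genuine short path in $M^{(1)}$, i.e. to certify an actual ridge joining a top facet to a bottom facet; this conversion is exactly where planarity must be invoked. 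Second, the analogous statement is false one dimension up: on $S^3$ two graphs can be linked, which is precisely what permits the first author's $5$-prismatoids of large width, so any correct proof is obliged to use $2$-dimensionality of $\Sph$ in an essential, non-formal way.

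Finally I would dispose of degeneracies — bases not in general position, facets with horizontal outer normal, or a non-simple $Q$ yielding non-transversal overlays of $G^+$ and $G^-$ — by perturbing $Q$ slightly, combined with the observation that the width cannot increase in the limit, so that the bound for generic prismatoids yields the bound in general. Putting the pieces together gives a dual path of the form $Q^+,F^+,\dots,F^-,Q^-$ of length at most $1+2+1=4$, which is the desired conclusion.
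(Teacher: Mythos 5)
Your proposal reduces the theorem to the right statement, but it does not prove that statement. The reduction you set up --- that the width of $Q$ equals two plus the minimum distance, in the overlay $H$ of the two normal-fan maps $G^+,G^-$ on the equatorial $2$-sphere, from a vertex of $G^+$ to a vertex of $G^-$ --- is exactly Lemma~\ref{lemma:maps}, which the paper imports from~\cite{Santos:Hirsch-counter}; that part of your argument is fine (your perturbation treatment of degeneracies is plausible, where the paper instead observes that vertex-intersections force width $\leq 1$ and that geodesic edges cross at most once, transversally). But the entire mathematical content of the paper is the statement you yourself flag as ``the heart of the argument'' and then leave as a declaration of intent: that any normal pair of graphs on $S^2$ has width two (Theorem~\ref{thm:width}). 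You say you ``would prove'' it by Jordan-curve and Euler-characteristic methods and that you ``expect this to be the main obstacle''; you have thus identified the correct difficulty and even the correct toolbox (the paper's first proof uses simple-connectedness of the sphere, its second an Euler-characteristic count over faces), but you supply no argument. What is actually needed is nontrivial: the paper shows that width $>2$ forces every crossing to carry two deep edges of the same sign (Lemma~\ref{lemma:degree3}), uses this to grow a \emph{well-formed} path --- turn right when passing from a negative to a positive edge, left in the opposite case --- until it closes into a well-formed loop (Lemma~\ref{lemma:loop}), and then proves by an infinite-descent argument that a well-formed loop can never bound a disc (Lemma~\ref{lemma:disc}), contradicting simple-connectedness of $S^2$. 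Nothing in your proposal substitutes for this chain, so the proof has a genuine hole precisely where the new mathematics lies.

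A secondary error is worth flagging: you assert that ``crossings in the superposition are not themselves facets of $Q$,'' and that converting an overlay path into a path in the dual graph of $Q$ is therefore a delicate extra step requiring planarity. The opposite is true, and it is the point of the reduction: every facet of $Q$ other than the two bases and those adjacent to a base appears precisely as a crossing point of $G^+$ with $G^-$, and adjacent facets of $Q$ appear as vertices of $H$ joined by an edge of $H$. Overlay paths \emph{are} dual paths; there is nothing to convert. This does not damage your reduction (Lemma~\ref{lemma:maps} can simply be quoted), but it indicates the dual correspondence was not fully worked out, and in any case it is separate from the main gap, which is that Theorem~\ref{thm:width} is asserted rather than proven.
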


The proof of this theorem uses the following reduction, also from~\cite{Santos:Hirsch-counter}. Let $Q\subset\real^d$ be a prismatoid with bases $Q^+$ and $Q^-$. Consider the bases as simultaneously embedded into $\real^{d-1}$ and let $G^+$ and $G^-$ be the intersections of their normal fans with the unit sphere $S^{d-2}$. $G^+$ and $G^-$ are geodesic cell decompositions of $S^{d-2}$, what we call \emph{geodesic maps}. Let $H$ be their common refinement: cells of $H$ are all the intersections of a cell of $G^+$ and a cell of $G^-$. Then:
\begin{itemize}
\item All facets of $Q$ other than the two bases appear as vertices of $H$.
\item The facets adjacent to $Q^+$ (respectively to $Q^-$) appear in $H$ as the vertices of $G^+$ (respectively of $G^-$).
\item Adjacent facets of $Q$ appear as vertices connected by an edge of $H$.
\end{itemize}

As a consequence, we get the following result, in which we call \emph{width} of the pair of geodesic maps $(G^+, G^-)$ the minimum graph distance  along $H$ from a vertex of $G^+$ to a vertex of $G^-$:

\begin{lemma}
\label{lemma:maps}
The width of a prismatoid $Q\subset\real^d$ equals two plus the width of its corresponding pair of maps $(G^+,G^-)$ in $S^{d-2}$.
\end{lemma}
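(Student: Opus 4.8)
The plan is to read off the correspondence between the dual graph of $Q$ and the refinement $H$ from the three bullet points stated just above the lemma, and to check that the only two extra steps come from entering and leaving the two bases. First I would set up the dictionary explicitly: by the first bullet, every facet of $Q$ other than $Q^+$ and $Q^-$ corresponds to a vertex of $H$, and by the third bullet, adjacency of such facets in $Q$ corresponds to an edge of $H$. Thus a dual path in $Q$ that starts at a facet adjacent to $Q^+$, avoids both bases, and ends at a facet adjacent to $Q^-$ is exactly a path in the graph of $H$ between a vertex of $G^+$ and a vertex of $G^-$ (using the second bullet to identify the endpoints with vertices of $G^+$ and $G^-$).

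Next I would handle the two endpoints of a geodesic in $Q$'s dual graph. A shortest dual path realizing the width of $Q$ goes from $Q^+$ to $Q^-$; its first step leaves $Q^+$ to a facet $F$ adjacent to $Q^+$, and its last step enters $Q^-$ from a facet $F'$ adjacent to $Q^-$. I would argue that an optimal path uses $Q^+$ and $Q^-$ only as its two endpoints: any occurrence of a base as an intermediate facet could be short-circuited, since all facets met along the way other than the two bases lie among the vertices of $H$ and the two bases are precisely the source and target. Stripping off the initial step into $G^+$ and the final step out of $G^-$ leaves a path in $H$ from a vertex of $G^+$ to a vertex of $G^-$, so the width of $Q$ is at least two plus the width of $(G^+,G^-)$.

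For the reverse inequality I would start from a minimum-length path in the graph of $H$ between a vertex $v^+$ of $G^+$ and a vertex $v^-$ of $G^-$ and lift it back to $Q$. By the correspondence, $v^+$ is a facet $F$ adjacent to $Q^+$ and $v^-$ is a facet $F'$ adjacent to $Q^-$, and each edge of the $H$-path is an adjacency of facets of $Q$; prepending the step $Q^+\to F$ and appending the step $F'\to Q^-$ yields a dual path in $Q$ from $Q^+$ to $Q^-$ whose length is two plus the length of the $H$-path. Hence the width of $Q$ is at most two plus the width of $(G^+,G^-)$, and combined with the previous paragraph the two quantities differ by exactly two.

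The main obstacle I anticipate is the claim that an optimal dual path in $Q$ visits each base exactly once, at its respective end; one must rule out a shorter route that dips back through $Q^+$ or $Q^-$ in the middle. I would address this by observing that the width of $(G^+,G^-)$ is defined as a distance within $H$ among vertices that exclude the bases, so any path through $H$ automatically avoids the bases, while on the $Q$ side a geodesic that revisited a base could be truncated at that revisit to produce a strictly shorter $Q^+$-to-$Q^-$ path, contradicting minimality. A secondary technical point is verifying that the endpoint identifications of the second bullet are consistent with the edge correspondence of the third, i.e. that the first and last steps genuinely account for entering $G^+$ and exiting $G^-$; this is bookkeeping but should follow directly from the stated properties of $H$.
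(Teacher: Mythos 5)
Your proposal is correct and follows exactly the route the paper intends: the paper states Lemma~\ref{lemma:maps} with no written proof at all, presenting it as an immediate consequence of the three bullet points imported from~\cite{Santos:Hirsch-counter}, and your argument is precisely that derivation made explicit (the facet--vertex dictionary, both inequalities, and the truncation argument showing a geodesic in the dual graph of $Q$ meets each base only at its endpoints). The one point worth flagging is that your reverse inequality silently uses the correspondence as a two-way dictionary --- that every vertex of $H$ (including every crossing) is a facet of $Q$ and every edge of $H$ joins adjacent facets --- which is true and is part of the reduction established in~\cite{Santos:Hirsch-counter}, but is stronger than the literal one-directional phrasing (``appear as'') of the bullets; the paper relies on the same implicit reading.
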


In particular, the width of a $4$-prismatoid is two plus the width of a pair of maps in the $2$-sphere. In the $2$-sphere, knowing the (embedded) graph of a map is enough to recover the whole map, so we rather speak of a \emph{pair of graphs} embedded in it.
If the two graphs intersect other than in the interior of edges, then the width of the pair is one or zero, so in what follows we assume that this does not happen. Moreover, since the maps we are interested in are geodesic, an edge of one cannot intersect an edge of the other more than once, and they do so transversally; that is, the four branches that come out from an intersection point alternate between $G^+$ and $G^-$ when we look at them cyclically. Putting together these conditions, we introduce the following notion, which looks natural in the context of geometric graph theory:

\begin{definition}
\label{def:normal-pair} Let $S$ be a closed surface and let $G^+$ and $G^-$ be two graphs embedded in it. We say that $(G^+,G^-)$ is a
\emph{normal pair of graphs} if $G^+$ and $G^-$ intersect only in the interior of edges,
each edge of one intersects each edge of the other at most once, and all the intersections are transversal.
\end{definition}

We also consider a somewhat stronger condition:

\begin{definition}
We say that $(G^+,G^-)$ form a \emph{strongly normal pair of graphs} if they
are a normal pair of graphs and in addition the intersection of an edge
of $G^+$ with the interior of a face of $G^-$ has at most one component (and
also with the roles of $G^+$ and $G^-$ interchanged).  
\end{definition}

This condition is also satisfied by any pair of geodesic maps, since it 
amounts to saying that an edge of $G^+$ cannot leave the interior of a geodesic
cell of $G^-$ and then re-enter it, which follows from convexity.  


We call \emph{width}\ of the normal pair the minimum distance from a vertex of $G^+$ to one of $G^-$ along the common refinement of the two graphs. Put differently, the width is one plus the minimum number of points of $G^+\cap G^-$ that you need to go through in order to get from a vertex of $G^+$ to one of $G^-$.  We assume that $G^+$ and $G^-$ intersect at least once. (Otherwise the width can be considered infinite, but the question we address in this paper is meaningless). 

As an example, the two graphs in Figure~\ref{fig:2d-grid} form a normal pair of width five in the torus, and the same idea shows the existence of normal pairs of arbitrarily large width in any closed surface of non-positive Euler characteristic. This example is taken from~\cite{5prismatoids}, where it is used to show the existence of prismatoids \emph{of dimension five} of arbitrarily large width. Our main result, which implies Theorem~\ref{thm:4prismatoid}, is that such pairs do not exist in the $2$-sphere:

\begin{figure}
\centerline{
\includegraphics[scale=.37]{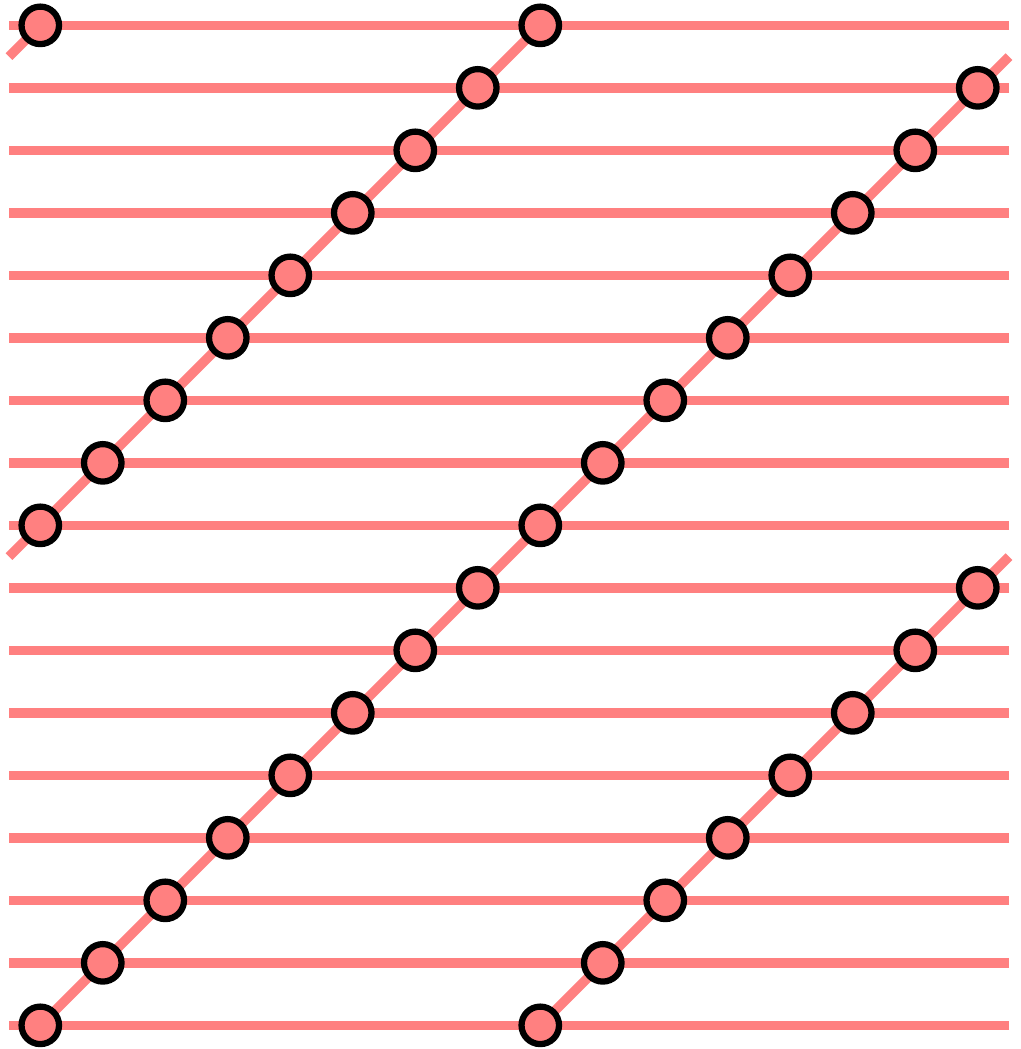}\ 
\includegraphics[scale=.37]{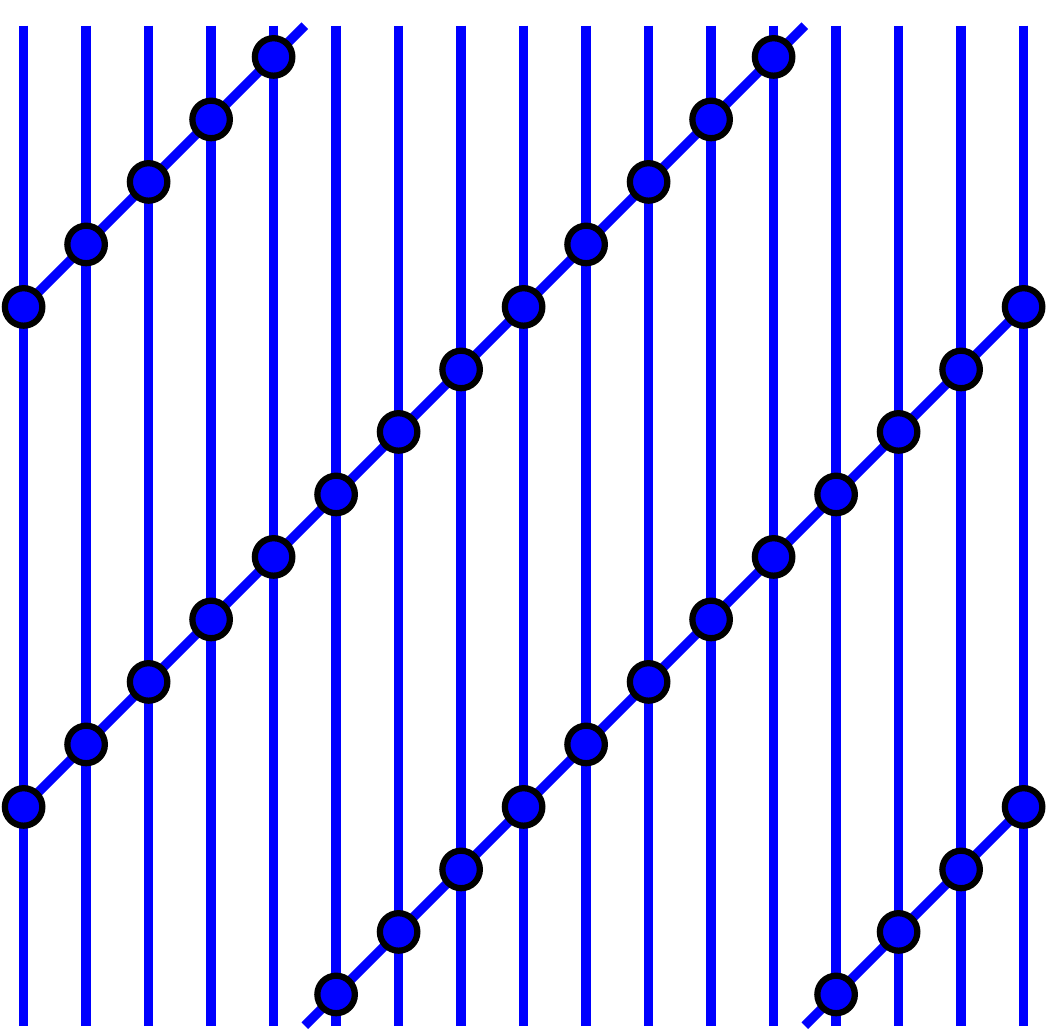}\ 
\includegraphics[scale=.37]{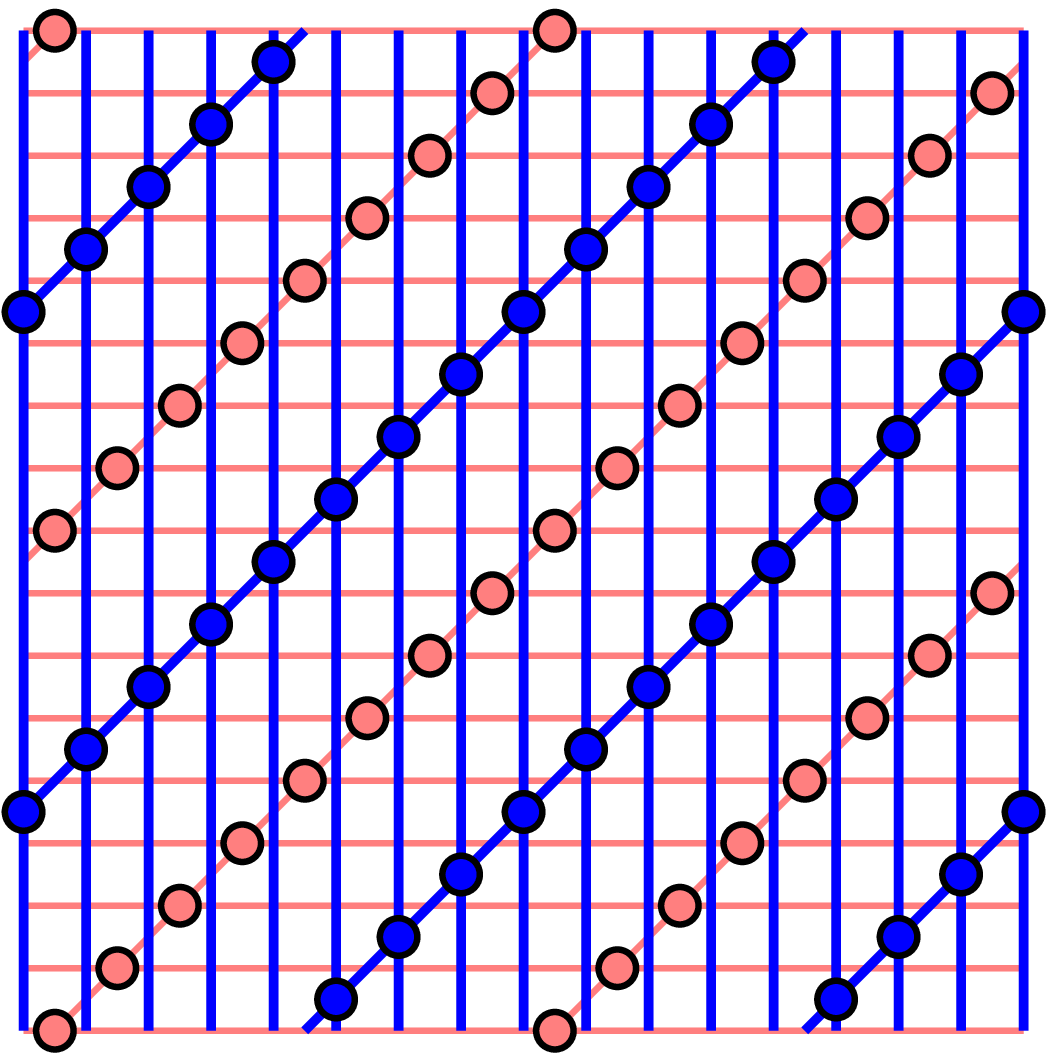}
}
\caption{Two graphs embedded in the torus (left and center) and the normal map they form (right), of width five}
\label{fig:2d-grid}
\end{figure}

\begin{theorem}
\label{thm:width}
Every normal pair of graphs in the sphere or the projective plane has width two.
\end{theorem}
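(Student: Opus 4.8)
My plan is to first translate the statement into a purely combinatorial condition on the common refinement $H$, and then establish that condition on the sphere by an innermost-region argument that uses the Jordan curve theorem in an essential way. By Lemma~\ref{lemma:maps} it suffices to argue about the normal pair directly. Since $G^+$ and $G^-$ meet only in the interiors of edges, a vertex of $G^+$ is never a vertex of $G^-$, and any walk in $H$ joining them must switch from $G^+$-arcs to $G^-$-arcs at a crossing; hence the width is always at least two and I only need the upper bound. I would record the following dictionary: a walk through exactly one crossing $p$ consists of a crossing-free $G^+$-walk from a vertex of $G^+$ to $p$, followed by a crossing-free $G^-$-walk from $p$ to a vertex of $G^-$. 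Tracing the $H$-edges incident to $p$, such a walk exists if and only if there is a crossing $p$ that in $H$ is adjacent both to a vertex of $G^+$ (along its $G^+$-edge) and to a vertex of $G^-$ (along its $G^-$-edge) --- equivalently, $p$ is the crossing nearest to an endpoint on each of the two edges through it. I will call such a $p$ a \emph{doubly-extreme} crossing. So the theorem reduces to: every normal pair on $S^2$ or $\mathbb{RP}^2$ has a doubly-extreme crossing.

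The topological feature I would exploit is that a connected graph embedded in $S^2$ has every face an open disk (and likewise for a cellular embedding in $\mathbb{RP}^2$); this is false on the torus, and it is precisely what the grid of Figure~\ref{fig:2d-grid} violates, since there $G^-$ consists of essential cycles with annular faces. Assuming $G^-$ connected and cellular, fix a face $F$ of $G^-$, which is a disk, and a vertex $w$ of $G^-$ on $\partial F$. The maximal arcs of $G^+\cap\bar F$ are of two kinds: \emph{chords}, whose two endpoints are crossings on $\partial F$, and \emph{pendants}, ending at a vertex of $G^+$ inside $F$. Each chord separates the disk $F$, and no chord ends at $w$ (its endpoints are crossings, not $G^-$-vertices). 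I would then pass to an innermost region: repeatedly cut $F$ along a chord, always keeping the piece whose boundary still contains $w$, until I reach a disk $D$ bounded by a single chord $c$ and an arc $\lambda\subseteq\partial F$ through $w$, with no chord in its interior.

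Let $p^*$ be the crossing of $\partial F$ nearest to $w$ along $\lambda$; being the nearest crossing to a $G^-$-vertex on its $G^-$-edge, it is extreme on the $G^-$ side. Now follow the $G^+$-edge through $p^*$ inward, into $D$. It cannot cross $c$, since two arcs of the embedded graph $G^+$ cannot meet, and it cannot re-meet $\partial F$, since that would produce a chord inside $D$, excluded by innermostness; therefore it terminates at a vertex of $G^+$, so $p^*$ is extreme on the $G^+$ side as well and is doubly-extreme. The one configuration this misses is when $D$ is an empty bigon bounded by a single $G^+$-edge-arc and the $G^-$-arc $\lambda$, so that $p^*$ is an endpoint of $c$ and its inward arc is $c$ itself; this degenerate case is the crux, and I expect it to be the main obstacle. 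I would dispose of it by a bigon-removal reduction: such a bigon can be eliminated by homotoping the offending $G^+$-edge across it, lowering the number of crossings by two while remaining a normal pair on the same surface, so that a counterexample with fewest crossings contains no empty bigon and the innermost region always contains a vertex of $G^+$.

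The work that remains --- and where I would be most careful --- is twofold. First, I must check that bigon-removal does not decrease the width below three, so that minimality genuinely yields a contradiction; the intuition is that the two crossings of an empty bigon can never shorten a path, but this needs to be verified on $H$. Second, I must justify the reductions to the connected, cellularly embedded case: disconnected $G^-$ on $S^2$ can create annular faces in which a chord need not separate, so these must be avoided by choosing a disk face met by $G^+$, and on $\mathbb{RP}^2$ one must rule out or handle a Möbius face. In every one of these points the decisive use of the hypothesis is the same: on $S^2$ and $\mathbb{RP}^2$ the relevant faces are disks and arcs in them separate, whereas on the torus essential arcs destroy the innermost step, which is exactly the mechanism that allows the width to grow there.
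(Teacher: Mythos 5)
Your reduction to finding a ``doubly-extreme'' crossing is correct (it is essentially the paper's Lemma~\ref{lemma:degree3}), and your innermost-chord argument does handle the generic case: if the crossing on $\partial F$ nearest to $w$ sends a pendant arc into the innermost region, that crossing is incident to a terminal edge of each sign and the width is two. But the degenerate case you flag is not a technicality to be cleaned up later --- it is where the entire difficulty of the theorem lives, and your proposed patch fails. In the bad configuration the bigon $D$ is bounded by a $G^+$-arc $c$ and a $G^-$-arc $\lambda$ whose \emph{interior necessarily contains $G^-$-vertices} (at least $w$, by your own choice of $D$). Homotoping the $G^+$-edge across $D$ removes the two crossings at the ends of $c$, but it creates one new crossing with every $G^-$-edge emanating from an interior vertex of $\lambda$ into the far side of $\lambda$. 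If those vertices carry two or more such edges, the crossing number does not decrease, so a minimal-crossing counterexample need not be bigon-free and your induction never gets started; moreover the pushed $G^+$-edge may now cross one of those emanating $G^-$-edges a second time (having already crossed it elsewhere), destroying normality. The degenerate configuration genuinely occurs --- for instance whenever a cycle of $G^+$ encircles a vertex $w$ of $G^-$, every face at $w$ exhibits it --- so it cannot be excluded, and escaping it requires a global argument, not a local reduction. That is precisely what the paper supplies: instead of one innermost region inside one face of $G^-$, it follows a ``well-formed'' path through the deep subgraph (turn right at negative-to-positive switches, left at positive-to-negative ones), shows that width greater than two forces such a path to close into a well-formed loop (Lemma~\ref{lemma:loop}), and then shows by an infinite-descent argument inside the disc that a well-formed loop cannot bound a disc (Lemma~\ref{lemma:disc}), which is impossible in $S^2$; your chord $c$ with deep continuations at both endpoints is exactly the beginning of such a path, and the descent replaces your broken minimality argument.

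There are also secondary gaps in your reductions. Assuming $H$ connected does not make $G^-$ connected, and a given $G^-$-vertex need not lie on any disk face of $G^-$ at all (with three nested cycles, a vertex of the middle cycle sees only annular faces), so choosing ``a disk face met by $G^+$'' requires an innermost-component argument that you do not give; even for connected $G^-$ the closure of a face need not be a closed disk (boundary walks can repeat edges and vertices), so ``every chord separates'' must be argued on the abstract boundary walk; and for $\mathbb{RP}^2$, Möbius-band faces do occur and cannot be ``ruled out'' --- the clean solution is the paper's Lemma~\ref{lemma:oriented}, lifting the pair to the orientable double cover, which preserves the width.
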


We prove this theorem in Section 3, using an argument based on the 
simply-connectedness of the sphere.  
In Section 4, we use the notion of Euler characteristic to provide an
independent proof of the theorem, under the stronger assumption that
the pair of graphs is strongly normal (which is still sufficient to 
deduce Theorem \ref{thm:4prismatoid}).


\section{Terminology and initial observations} \label{initial}

Start with an arbitrary normal pair of graphs $(G^+,G^-)$ in an arbitrary closed surface $S$. We introduce the following nomenclature: we call $G^+$ and $G^-$ the \emph{positive} and \emph{negative} graphs, and let $H$ be the common graph induced by the embedding. In $H$ we have three types of vertices: \emph{positive} (vertices of $G^+$), \emph{negative} (vertices of $G^-$), and \emph{crossings}. We also have four types of edges: each edge is \emph{positive} or \emph{negative} depending on whether it is contained in an edge of $G^+$ or one of $G^-$, and it is \emph{deep} or \emph{terminal} depending on whether its two end-points are crossings or at least one of them is not. Observe that edges ending in no crossings (that is, edges of $G^+$ or $G^-$ that do not cross the other graph of the pair) do not contribute to the width, so we may as well contract them and assume our pair of graphs does not have any. Contraction may make our graphs be not simple, so we admit that possibility from the beginning.

\begin{figure}
\centerline{
\includegraphics[scale=.5]{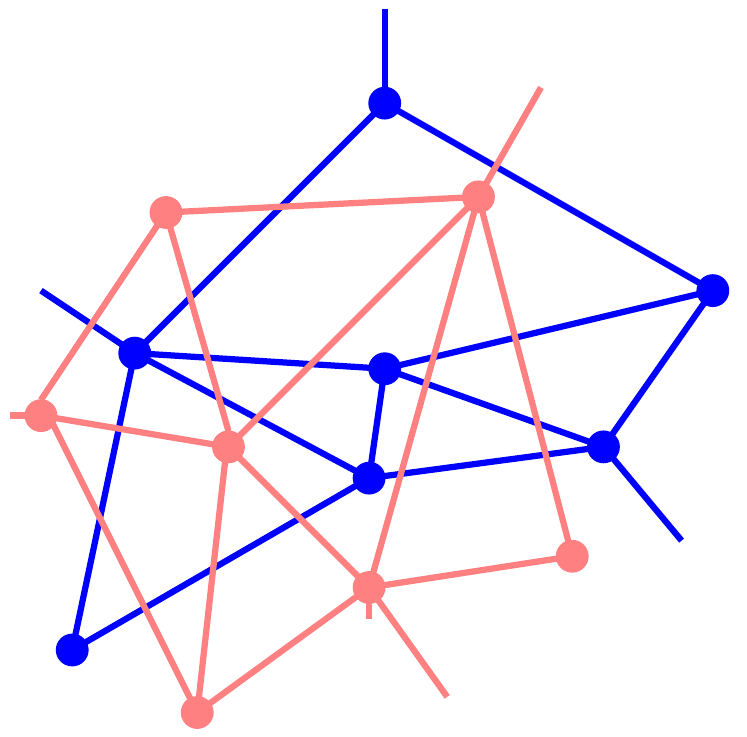}\ 
\includegraphics[scale=.5]{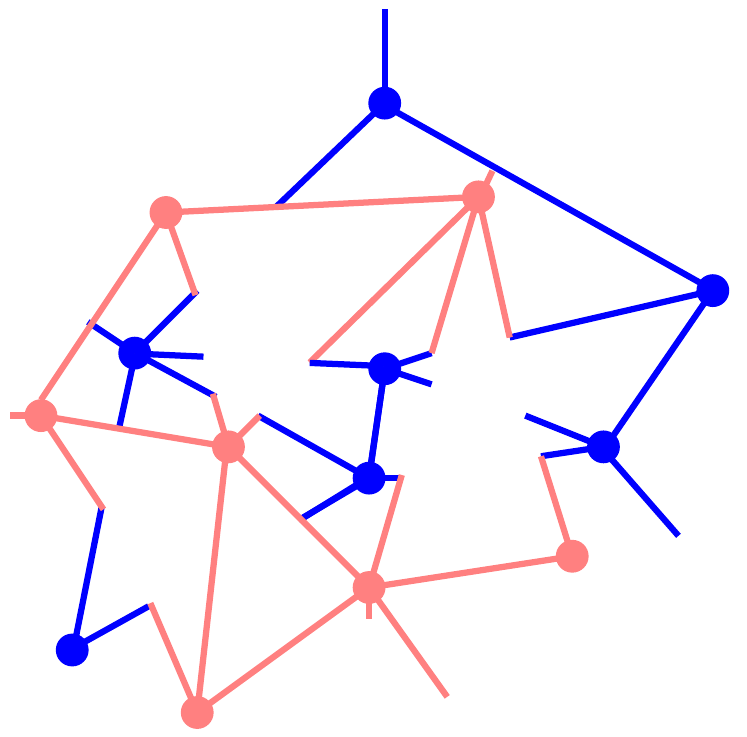}\ 
\includegraphics[scale=.5]{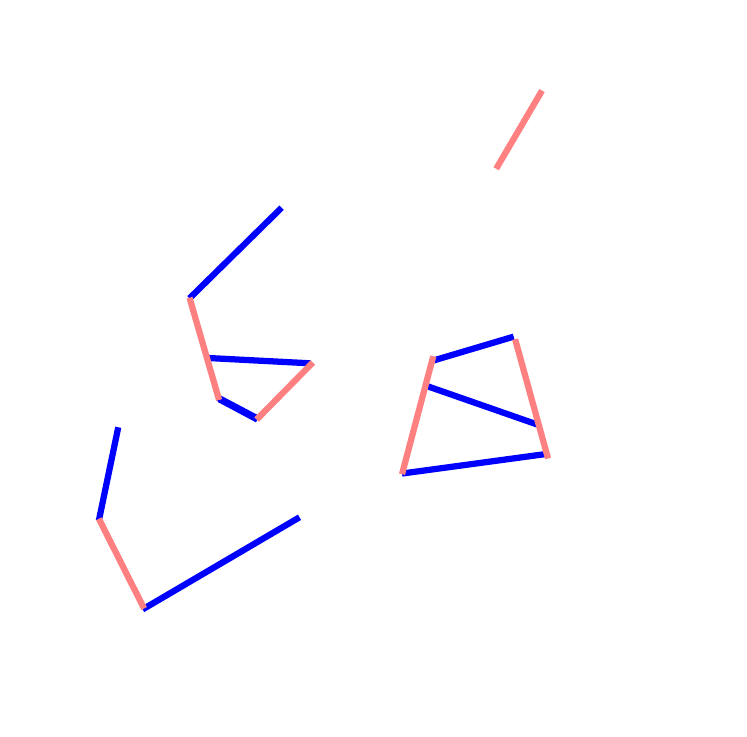}
}\caption{A normal pair of graphs (left), its terminal part (center) and its deep part (right)}
\label{fig:deep-terminal}
\end{figure}

We also show that there is no loss of generality in assuming that $S$ is 
orientable.  

\begin{lemma}
\label{lemma:oriented}
If $(G^+,G^-)$ is a normal pair in a non-orientable surface $S$, then the pull-back of $(G^+,G^-)$ via the double cover $S'\to S$ from an orientable surface $S'$ yields a normal pair  $({G^+}',{G^-}')$ in $S'$ with the same width.
\end{lemma}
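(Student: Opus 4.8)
The plan is to use the \emph{orientation double cover} $\pi\colon S'\to S$, the standard $2$-to-$1$ covering of a non-orientable closed surface $S$ by an orientable one, and to define $({G^+}',{G^-}')$ simply as the pullbacks ${G^+}'=\pi^{-1}(G^+)$ and ${G^-}'=\pi^{-1}(G^-)$, with induced common refinement $H'=\pi^{-1}(H)$. Since $\pi$ is a local homeomorphism, it pulls back the embedded cell structure of each graph, so ${G^+}'$ and ${G^-}'$ are again graphs embedded in $S'$; and because every open cell of $H$ is contractible, it is evenly covered, so each vertex and each edge of $H$ has exactly two lifts and $\pi$ restricts to a homeomorphism on each lifted cell. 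In particular $\pi$ carries vertices of $H'$ to vertices of $H$ preserving their type (positive, negative, or crossing), and carries each edge of $H'$ homeomorphically onto an edge of $H$.

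First I would check that $({G^+}',{G^-}')$ is a normal pair. The conditions that the graphs ``intersect only in the interior of edges'' and that ``all intersections are transversal'' are local, hence inherited immediately from $(G^+,G^-)$ through the local homeomorphism $\pi$. For the ``at most once'' condition, fix lifts $\tilde e^+$ and $\tilde e^-$ of an edge $e^+$ of $G^+$ and an edge $e^-$ of $G^-$. Any point of $\tilde e^+\cap \tilde e^-$ projects to a point of $e^+\cap e^-$, which is a single point $p$ (or is empty); and since $\pi$ is injective on $\tilde e^+$, at most one point of $\tilde e^+$ maps to $p$. Hence $\tilde e^+$ and $\tilde e^-$ meet at most once, as required.

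The core of the argument is that the width is unchanged, which I would prove by two inequalities, both by elementary covering-space arguments. For $\operatorname{width}(H')\le\operatorname{width}(H)$: take a shortest path $\gamma$ in $H$ from a positive vertex $a$ to a negative vertex $b$, choose a lift $\tilde a$ of $a$ (automatically positive), and lift $\gamma$ to the unique path $\tilde\gamma$ in $H'$ starting at $\tilde a$; its other endpoint is a lift $\tilde b$ of $b$ (automatically negative) and, since edges are evenly covered, $\tilde\gamma$ has the same length as $\gamma$. For $\operatorname{width}(H)\le\operatorname{width}(H')$: take a shortest path $\tilde\gamma$ in $H'$ from a positive vertex $\tilde a$ to a negative vertex $\tilde b$ and project it by $\pi$ to a walk of the same length from the positive vertex $\pi(\tilde a)$ to the negative vertex $\pi(\tilde b)$. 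Combining the two, the widths coincide.

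Since the only delicate structural point is the even covering of cells---which is what makes path lifting, path projection, and vertex-type preservation behave exactly as needed---I expect no genuine obstacle here. The main thing to state carefully is that $\pi$ induces an honest $2$-fold covering of the $1$-complex $H$, so that the elementary lifting and projection of paths preserve both length and the positive/negative labelling of the endpoints; everything else is a routine transfer of local properties across a local homeomorphism. (As a special case, this reduces the projective-plane instance of Theorem~\ref{thm:width} to the sphere, whose orientation double cover is $S^2$.)
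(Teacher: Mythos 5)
Your proposal is correct and follows essentially the same route as the paper: pull back along the orientation double cover and observe that paths between the two graphs lift and project with their lengths (and endpoint types) preserved, giving equal widths. The paper's proof is just this path-lifting/projection observation stated in two lines; your additional verification that the pulled-back pair is again normal (local conditions transfer under the local homeomorphism, and the ``at most once'' condition follows from injectivity of $\pi$ on edge interiors) fills in a detail the paper leaves implicit.
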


\begin{proof}
Every path from $G^+$ to $G^-$ in $S$ lifts to a path (actually to two paths) from ${G^+}'$ to ${G^-}'$ in $S'$, of the same length. Reciprocally, every path in $S'$ projects to $S$.
\end{proof}

%

\section{First proof of Theorem~\ref{thm:width}} 

As in the previous section, start with an arbitrary normal pair of  
graphs $(G^+,G^-)$ in an arbitrary closed surface $S$.
This proof concentrates on the \emph{deep} subgraph of $H$, that we denote $H_0$. This is the subgraph consisting of deep edges or, equivalently, the subgraph induced by the crossing points. Observe that if $H_0$ has no edges then the width of the pair $(G^+,G^-)$ is trivially two: every terminal positive edge is adjacent to a terminal negative edge, and those two edges from a path from $G^+$ to $G^-$ of length two. More generally:

\begin{lemma}
\label{lemma:degree3}
The width of the pair $(G^+,G^-)$ exceeds two if, and only if, every vertex of $H_0$ is incident to either two deep positive edges or two deep negative edges.
\end{lemma}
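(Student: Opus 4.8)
The plan is to translate the width condition into a purely local statement about each crossing, using the observation that at every crossing exactly four edges of $H$ meet---two positive and two negative, alternating cyclically---and that a positive (respectively negative) edge of $H$ is terminal precisely when its far endpoint is a positive (respectively negative) vertex, since a positive edge can only end at a crossing or at a vertex of $G^+$. Thus ``incident to two deep positive edges'' is the same as ``having no terminal positive edge,'' and symmetrically for the negative side.

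First I would record that the width is always at least two: a positive vertex lies only on positive edges and a negative vertex only on negative edges, so any path in $H$ joining them must switch colour, and colour switches occur only at crossings. Hence every such path passes through at least one crossing, and the width equals two exactly when some path passes through exactly one crossing.

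The core step is the equivalence: \emph{the width equals two if and only if some crossing $v$ is incident to both a terminal positive edge and a terminal negative edge.} For the ``if'' direction, such a $v$ comes with a terminal positive edge reaching a positive vertex $p$ and a terminal negative edge reaching a negative vertex $n$; the path $p, v, n$ then uses the single crossing $v$, so the width is at most two, hence exactly two. For the ``only if'' direction I take a shortest path realizing width two; it meets exactly one crossing $v$, which it enters and leaves along two edges whose other endpoints are non-crossings, so by the local observation both these edges are terminal. They cannot both be positive, for then both neighbours of $v$ on the path would be positive vertices, and the path---forbidden from meeting any further crossing---would remain among positive vertices and edges, never reaching the negative endpoint; symmetrically they cannot both be negative. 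So one is terminal positive and the other terminal negative.

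Finally I negate. The width exceeds two if and only if no crossing carries both a terminal positive and a terminal negative edge, i.e.\ every crossing fails to have a terminal positive edge or fails to have a terminal negative edge. By the reformulation in the first paragraph this says exactly that every vertex of $H_0$ is incident to two deep positive edges or to two deep negative edges, which is the claim. The one delicate point is the ``only if'' argument above---ensuring the two path-edges at the unique crossing are of opposite colours---which is where the principle that crossings cannot be avoided must be invoked with care.
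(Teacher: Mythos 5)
Your proof is correct and follows essentially the same route as the paper's: both hinge on the equivalence that the width equals two exactly when some crossing is incident to both a terminal positive and a terminal negative edge, and then negate, using the fact that a crossing carries exactly two positive and two negative edges of $H$. Your write-up simply makes explicit the local observations (terminal $=$ far endpoint is a signed vertex, width $\geq 2$ always) that the paper leaves implicit.
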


\begin{proof}
The width equals two if, and only if, there is a crossing incident to both a terminal positive and a terminal negative edge.
If this does not occur then every crossing is incident to two deep edges of one sign (and to zero, one or two of the other).
\end{proof}

By Lemma \ref{lemma:oriented}, there is no loss of generality in assuming
that $S$ is orientable.  We therefore assume that $S$ is orientable, and
give an orientation to it. In particular, this gives a meaning to the
expressions ``left'' and ``right'' of a directed edge.  

\begin{definition}
\label{def:loop}
\rm
A path in $H_0$ is \emph{well-formed} if when traversing $C$ we always turn right when passing from a negative to a positive edge and turn left when passing from a positive to a negative edge. (The path is allowed to ``go straight'' at some of the vertices, meaning that it keeps walking along the same edge of $G^+$ or $G^-$). A \emph{well-formed loop} is a cycle with the same property except 
(perhaps) at its base vertex.
\end{definition}


Observe that the definition is independent of the direction in which we traverse the path or cycle, since reversing the direction exchanges left and right turns but it also exchanges passings from positive to negative and passings from negative to positive. The following two lemmas prove Theorem~\ref{thm:width}.

\begin{lemma}
\label{lemma:loop}
If a normal pair of graphs has width larger than two then it contains a well-formed loop.
\end{lemma}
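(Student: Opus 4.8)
The plan is to prove the contrapositive-flavored statement: assuming the width exceeds two, I want to produce a well-formed loop in $H_0$. By Lemma~\ref{lemma:degree3}, the width-exceeds-two hypothesis gives me strong local information: every vertex of $H_0$ is incident to two deep positive edges or two deep negative edges. The strategy is to build a well-formed path greedily and show it must close up into a loop. First I would fix an orientation of $S$ (justified, since Lemma~\ref{lemma:oriented} lets me assume $S$ orientable) so that ``left'' and ``right'' are meaningful. Then, starting from any crossing and any incident deep edge, I would extend the path one edge at a time, always obeying the well-formed rule: after arriving at a crossing along a positive edge I turn left onto a negative edge (or go straight along $G^+$), and after arriving along a negative edge I turn right onto a positive edge (or go straight along $G^-$).

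The crucial point is that this extension procedure never gets stuck, which is exactly where Lemma~\ref{lemma:degree3} is used. At each crossing I reach, I need an appropriate outgoing deep edge to continue the well-formed rule. Since each crossing is a transversal intersection of one positive and one negative edge of $H$, the four half-edges around a crossing alternate in sign cyclically; the left turn from a positive edge and the right turn from a negative edge each designate a specific one of the four half-edges. The hypothesis that every vertex is incident to two deep positive or two deep negative edges guarantees that the half-edge the rule demands is in fact a deep edge (not a terminal one that would leave $H_0$), so the path can always be prolonged within $H_0$. I would verify that in either case of Lemma~\ref{lemma:degree3} the required continuation edge is present.

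Since $H_0$ is a finite graph, an infinitely extendable well-formed path must eventually revisit a vertex, and more precisely must repeat a directed edge. The key refinement is that the well-formed extension rule is \emph{deterministic}: knowing the directed edge by which I enter a crossing determines the directed edge by which I leave it. Consequently the process is also deterministic in reverse, so the walk is eventually periodic from the very start, i.e. the first repeated directed edge is the initial one. This closes the path into a cycle that satisfies the well-formed condition at every vertex except possibly the base vertex, which is precisely a well-formed loop as in Definition~\ref{def:loop}.

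The main obstacle I anticipate is the bookkeeping around crossings: making the left/right turn rule precise in terms of the cyclic (alternating) order of the four half-edges at a crossing, and checking rigorously that Lemma~\ref{lemma:degree3} supplies the needed deep edge in each of its two cases and for each of the two roles (entering along positive versus entering along negative). The ``go straight'' option built into Definition~\ref{def:loop} must be handled carefully here, since a vertex might offer a straight continuation along the same graph rather than a genuine turn, and I would need to confirm that the deterministic-successor argument still works when straight moves are permitted. Establishing this determinism cleanly—so that reversibility forces the loop to return to its starting directed edge—will be the technical heart of the argument.
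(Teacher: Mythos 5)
Your greedy construction is essentially the paper's (go straight while the straight continuation is deep; when it is not, Lemma~\ref{lemma:degree3} forces both half-edges of the other sign at that crossing to be deep, and you take the well-formed turn), but your closing argument has a genuine gap. First, note that determinism itself requires imposing a priority, since at a crossing both the straight continuation and the well-formed turn may be available as deep edges; say you adopt the natural rule ``go straight if possible, otherwise turn.'' This gives a well-defined successor function on directed deep edges, but a function need not be injective, and this one is not: a directed deep edge leaving a crossing along, say, the positive edge through it can arise both from a straight predecessor (entering along the other positive half-edge, when that half-edge is deep) and from a turning predecessor (entering along a deep negative half-edge whose own straight continuation is terminal, which is what forces the turn). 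A crossing whose four half-edges consist of three deep ones $p_1, p_2, n_1$ and one terminal one $n_2$ is perfectly compatible with Lemma~\ref{lemma:degree3}, and at such a crossing the walks entering along $p_1$ (going straight) and along $n_1$ (forced turn) merge into the same outgoing directed edge. So ``deterministic in reverse'' is false, the orbit of your walk is in general $\rho$-shaped, and it need not return to its initial directed edge.

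You might hope to salvage this by taking the periodic part of the $\rho$, which is a closed walk all of whose transitions (including the wrap-around) obey the well-formed rule. But that closed walk is only guaranteed not to repeat \emph{directed edges}; it can perfectly well pass through the same crossing twice, and hence fail to be a simple closed curve. Simplicity is not cosmetic here: Definition~\ref{def:loop} speaks of a cycle, and the downstream use of the loop---Lemma~\ref{lemma:disc} and the final step that a loop not bounding a disc cannot exist in the $2$-sphere---is a Jordan-curve argument that only makes sense for simple closed curves. This is precisely why the paper's proof stops at the first repeated \emph{vertex} rather than the first repeated directed edge: cutting there yields a simple cycle automatically, at the price that the turn at that one vertex may fail to be well-formed, which is exactly the exception that Definition~\ref{def:loop} builds in at the base vertex. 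Replacing your stopping rule by the paper's (first repeated vertex) repairs the argument and makes the reversibility question, which you correctly identified as the technical heart of your approach, moot.
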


\begin{proof}
Starting at any deep edge $e$ we can build a well-formed path as follows: go straight on edges of the same sign as $e$ until you cannot go further. When this happens, Lemma~\ref{lemma:degree3} guarantees that you are at a vertex incident to two edges of the other sign, one to the left and one to the right. Choose the one that makes the path be well-formed and continue.

Since this process can always be continued, at some point the path will cross with itself. The first time this happens, you declare the repeated vertex to be the base for the loop obtained, which is well-formed by construction. (Observe that the original edge $e$ may not belong to the loop).
\end{proof}

\begin{lemma}
\label{lemma:disc}
A well-formed loop in a normal pair of graphs cannot bound a disc.
\end{lemma}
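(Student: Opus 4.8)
The plan is to use simple-connectedness in the form of the discrete de~Rham principle: construct a $1$-cochain $\omega$ on the common refinement $H$ that is closed, that evaluates strictly positively along the loop, and then observe that on a disc a closed cochain is exact, forcing its integral around a bounding loop to vanish---a contradiction. Suppose then that a well-formed loop $C$ bounds a disc $D$, orient $S$, and traverse $C$ with $D$ on its left. The only geometric input I need is a local fact at each crossing that $C$ visits: since the four branches there alternate in sign, the well-formed rule (turn left at a $+\to-$ passage, right at a $-\to+$ passage) pins down, via the cyclic order of the branches, which edges lie to which side of $C$. A short case check yields the dichotomy that at a right corner the two edges not used by $C$ both point into $D$, while at a left corner both point out of $D$; at a crossing where $C$ goes straight it simply passes through. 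This is the step that records how the well-formed hypothesis constrains the local picture.

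Next I would orient the positive and negative edges of $H$ lying in $\bar D$, choosing the orientation along $C$ to agree with the direction in which $C$ is traversed, and propagate outward across faces by the rule that opposite sides of a face receive consistent orientations. Define $\omega$ to give $+1$ to an edge traversed in its chosen orientation and $-1$ otherwise. By the choice along $C$, every edge of $C$ contributes $+1$, so $\oint_C \omega$ equals the number of edges of $C$ and is strictly positive. What well-formedness buys here, through the dichotomy above, is that the orientation read off from $C$ is compatible with the opposite-sides propagation, so that $C$ behaves like a genuinely monotone curve rather than a face boundary, for which the contributions would instead cancel.

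It then remains to check that $\omega$ is closed, i.e.\ that $\oint_{\partial F}\omega = 0$ for every bounded face $F$ of $H$ inside $D$. The generic face is a quadrilateral whose boundary alternates two positive with two negative sub-edges; walking around $\partial F$ one crosses the two positive sides in opposite senses and likewise the two negative sides, so the four signs cancel as soon as the opposite sides carry consistent orientations---which is precisely what the propagation rule arranges. Granting closedness, simple-connectedness of $D$ gives $H^1(D)=0$, hence $\omega = \delta f$ for a vertex function $f$; but then $\oint_C\omega = 0$ because $C$ bounds, contradicting $\oint_C\omega>0$.

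The hard part, I expect, is everything hidden in ``propagate outward'' and ``closed'': I must show the opposite-sides rule is globally consistent (no monodromy, which is where simple-connectedness enters a second time) and that closedness survives at the non-generic faces---those that are not quadrilaterals and, especially, those incident to an honest vertex of $G^+$ or $G^-$, where several edges of one sign meet and the alternating boundary pattern fails. I would try to sidestep these by first passing to the deep subgraph $H_0$ and the faces it induces, so that only crossings appear on the boundaries in question, and then verifying the orientation rule vertex by vertex. As a consistency check the whole scheme must---and does---fail on the torus: there the same $\omega$ is still closed but represents a nonzero class in $H^1$, which is exactly why the grid of Figure~\ref{fig:2d-grid} carries a well-formed loop while the sphere does not.
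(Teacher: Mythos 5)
There is a genuine and fatal gap: your argument never uses the hypothesis that the width of the pair exceeds two, and the lemma is false without it. (The statement inherits this hypothesis from its context: the paper's proof invokes Lemma~\ref{lemma:degree3}, which is an equivalence with width $>2$, to produce at a reflex vertex of the disc a \emph{deep} edge entering the disc, from which a new well-formed path and hence a smaller well-formed loop is built --- an infinite descent. The alternative proof via Euler characteristic likewise assumes width $>2$ in Lemma~\ref{nospec}.) To see that no width-free argument can succeed, consider the following normal pair in the plane (hence in $\Sph$), drawn with the counterclockwise orientation. Let $G^+$ consist of two disjoint arcs: $a_1$, the segment from $(-1.5,0)$ to $(0.5,0)$, and $a_2$, the segment from $(-0.5,1)$ to $(1.1,1)$. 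Let $G^-$ consist of two disjoint arcs: $b_1$, the segment from $(0,-0.5)$ to $(0,1.5)$, and $b_2$, a curve starting at $(1,0.5)$, going north through $(1,1)$, then making a large clockwise detour (east, south, west, staying clear of all other edges), crossing $a_1$ at $(-1,0)$ and ending at $(-1,0.3)$. There are exactly four crossings $x_1=(0,0)$, $x_2=(0,1)$, $x_3=(1,1)$, $x_4=(-1,0)$, each pair of edges crossing at most once, and the four deep edges $x_4x_1\subset a_1$, $x_1x_2\subset b_1$, $x_2x_3\subset a_2$, $x_3x_4\subset b_2$ form a cycle: traversed as listed it turns left at the $+\to-$ passages $x_1,x_3$ and right at the $-\to+$ passages $x_2,x_4$, so it is well-formed at \emph{every} vertex, and it is a simple closed curve bounding a disc $D$. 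Yet the width of this pair is two: the positive vertex $(0.5,0)$, the crossing $x_1$, and the negative vertex $(0,-0.5)$ form a path of length two.

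This example also pinpoints exactly which step of your plan breaks, namely the ``closedness'' you defer as the hard part. The interior of $D$ contains no crossings at all; it contains only four terminal edges hanging into $D$ at the reflex vertices $x_1$ and $x_3$. Hence $H$ has a single face $F$ inside $D$, and its boundary walk is the loop $C$ together with each slit edge traversed twice in opposite directions, so for \emph{any} $1$-cochain $\omega$ one has $\oint_{\partial F}\omega=\pm\oint_C\omega$. A cochain strictly positive along $C$ therefore cannot be closed; positivity on the loop and closedness on the faces are irreconcilable precisely at this non-generic face (not a quadrilateral, and incident to signed vertices). Passing to $H_0$, as you propose, makes matters worse: there $D$ itself is the offending face. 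So the two gaps you acknowledge (monodromy-freeness of the orientation propagation, and closedness at non-generic faces) are not technicalities to be checked later; they can only be closed by injecting the width hypothesis somewhere, e.g.\ via the degree condition of Lemma~\ref{lemma:degree3}, and your outline contains no mechanism for doing so. The paper's proof is structured entirely around that condition: reflex vertex, deep edge into the disc, well-formed path, smaller well-formed loop, infinite descent.
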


\begin{proof}
Let $L$ be a well-formed loop with base vertex $v$ and suppose, to get a contradiction, that it bounds a disc $D$. We are going to show how to construct another well-formed loop in $D$, which necessarily should bound a smaller disc $D'\subset D$. Since the same applies to $D'$, we can construct an infinite sequence of loops bounding smaller and smaller discs, a contradiction.

To construct $L'$ observe that, by normality of the pair of maps, the loop $L$ has at least two turns, apart from the possible turn at the base point $v$. Since left and right turns alternate, at least one of the turns is to the left when we traverse $L$ in the clockwise direction (clockwise is understood with respect to $D$ and to the given orientation in it). Let $u$ be the vertex where this happens. Put differently, $u$ is a ``concave'' or ``reflex'' vertex in the boundary of $D$.

By Lemma~\ref{lemma:degree3}, apart from the positive and negative edges incident to $u$ in $L$, $H_0$ has at least one more deep edge
 $e$ coming out from $u$, necessarily towards 
the interior of $D$. We use that edge $e$ to start a well-formed path, as in the proof of Lemma~\ref{lemma:loop}, and stop when the path intersects 
either itself or the loop $L$. If the path intersects itself then we have found a well-formed loop contained in $D$. If it intersects $L$, then let $w$ be the 
point where this happens and let $C$ be the well-formed path that we have obtained from $u$ to $w$ through the interior of $D$. Let $C_1$ and 
$C_2$ be the two paths in which $L$ is divided by the vertices $u$ and $w$ (see Figure~\ref{fig:disc}). Our claim is that $C$ together with one of $C_1$ or $C_2$ is a well-formed loop. 

\begin{figure}[htb]
\begin{center}
\input{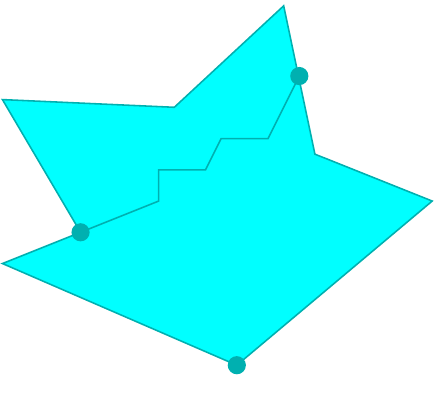_t}
\caption{A well-formed path (left) and a well-formed loop (right)}
\label{fig:disc}
\end{center}
\end{figure}

Indeed, assume first that $v=w$. Then we start our loop from $v$ using the interior path $C$ and when we arrive at $u$ one of the turns to $C_1$ or $C_2$ is well-formed, which closes the loop. If $v$ is different from both $u$ and $w$, assume that $v$ lies in $C_1$. The key idea is that at both  
$u$ and $w$ at least one of the turns, from $C$ to $C_1$ or from $C$ to $C_2$, is valid for a well-formed path. Then:
\begin{enumerate}
\item If both turns from $C$ to $C_1$ are valid, then $C$ and $C_1$ give a well-formed loop based at $v$.

\item If the turn from $C$ to $C_1$ is invalid at, say, $u$, then $C$ and $C_2$ give a well-formed loop based at $w$, because the turn from $C$ to $C_2$ is valid at $u$.
\end{enumerate}
\end{proof}

\begin{proof}[Proof of Theorem~\ref{thm:width}]
Lemmas~\ref{lemma:loop} and~\ref{lemma:disc} imply that a normal pair of graphs of width larger than two has a loop that does not bound a disc, which cannot happen in the $2$-sphere. For the projective plane we use the reduction of Lemma~\ref{lemma:oriented}.
\end{proof}

\section{Alternative proof of Theorem~\ref{thm:width}} 

In this section, we provide an alternative proof of Theorem \ref{thm:width},
under the stronger hypothesis that the pair $(G^+,G^-)$ are strongly normal.

As discussed in Section \ref{initial}, to prove Theorem~\ref{thm:width}, 
it suffices
to assume that $(G^+,G^-)$ are a pair of graphs embedded in a sphere
$\Sph$, such that each edge of $G^+$ crosses some edge of $G^-$, and
vice versa.  Note that the reductions to this case preserve the 
property that the pair of graphs is strongly normal.
We again study the graph $H$ induced by the embedding.
In this case, we focus on the complex of 0-, 1- and 2-dimensional
cells (vertices, edges and faces) defined by the embedding. 

We assume that $H$ is connected, since it suffices to
consider some component of $H$.  The connectedness of $H$ implies that the 
faces defined by $H$ are contractible.  

We will calculate the Euler characteristic of the sphere $\Sph$, 
using the decomposition induced by $H$.  Since the faces of the decomposition are contractible,
this is just the number of vertices of $H$, minus
the number of edges, plus the number of faces. 
Our strategy will be to bound this sum by a contribution for each face.  
Assuming that the width of $(G^+,G^-)$ is greater that two, 
we will show that the contribution for each face is  
non-positive, and thus that the Euler characteristic of $\Sph$
is non-positive.  Since the Euler
characteristic of $\Sph$ is positive, this is a contradiction.  

We now discuss how to bound the Euler characteristic by a contribution 
for each face.  
Let
$F$ be a face of $H$.  Consider (as a first step) the following sum:

\begin{align*}c(F)=&\frac14 \textrm{ (the number of crossing vertices incident to $F$)} \\ &-\frac12 \textrm{ (the number of edges of $F$)}\\ & + 1.
\end{align*}
Note that, in this definition, we count edges and vertices with 
multiplicity.  That is to say, we count them going around the boundary
of the face, so that, if we revisit an edge or vertex, we count it again.  

We have the following:

\begin{lemma} Let $(G^+,G^-)$ be a normal pair of graphs such that $H$ is
connected.  Then:
$\chi(\Sph)=|V(G^+)|+|V(G^-)|+\sum_F c(F)$. \end{lemma}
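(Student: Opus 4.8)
The plan is to read $\chi(\Sph)$ directly off the cell decomposition induced by $H$ and then match it, term by term, against the right-hand side. Because $H$ is connected, every face is contractible, so the vertices, edges and faces of $H$ form a genuine CW structure on $\Sph$ and hence $\chi(\Sph)=V-E+F$, where $V$, $E$, $F$ denote the total numbers of vertices, edges and faces of $H$. The vertices split into three disjoint classes --- positive vertices, negative vertices, and crossings --- so $V=|V(G^+)|+|V(G^-)|+|C|$, writing $C$ for the set of crossings. After substituting this into $V-E+F$, it suffices to prove the identity $\sum_F c(F)=|C|-E+F$; the two vertex-graph terms $|V(G^+)|$ and $|V(G^-)|$ then account exactly for the same two summands on the right of the claimed formula.

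To establish $\sum_F c(F)=|C|-E+F$, I would first expand the sum by linearity,
\[
\sum_F c(F)=\tfrac14\sum_F n_c(F)-\tfrac12\sum_F n_e(F)+\sum_F 1,
\]
where $n_c(F)$ and $n_e(F)$ denote, respectively, the number of crossings and the number of edges incident to $F$, each counted with multiplicity along the boundary walk of $F$ as prescribed in the definition. The final sum is just the number of faces, $\sum_F 1=F$. The edge sum is evaluated by the standard edge--face double count: each edge borders exactly two face-sides, so it is traversed exactly twice when we add up all the boundary walks, giving $\sum_F n_e(F)=2E$ and thus the contribution $-\tfrac12(2E)=-E$. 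The crossing sum is a vertex--face (corner) count: the boundary walks passing through a vertex visit it once for each corner at that vertex, and the number of corners at a vertex equals its degree; since normality forces each crossing to be a transversal intersection of four distinct branches, every crossing has degree exactly $4$, so $\sum_F n_c(F)=4|C|$ and the contribution is $\tfrac14(4|C|)=|C|$. Assembling the three pieces yields $\sum_F c(F)=|C|-E+F$, which is exactly what was needed.

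The routine parts are the linearity expansion and the arithmetic of reassembly; the step that needs genuine care, and which I expect to be the main obstacle, is justifying the two multiplicity identities in full rigor. Both rely on interpreting ``incident with multiplicity'' as the boundary-walk count, and on handling faces that meet a given edge or crossing more than once (as well as possible non-simplicity introduced by the earlier edge contractions). The edge identity is robust because an edge always has two sides even when both lie on the same face. The crossing identity is the delicate one: it is precisely here that I would invoke the transversality hypothesis built into the definition of a normal pair to pin the degree of every crossing to $4$, and I would phrase the corner count carefully so that it remains valid when a crossing appears several times on the boundary of a single face. Once these two combinatorial equalities are secured, the lemma is immediate.
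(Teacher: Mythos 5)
Your proof is correct and follows the same route as the paper: the paper's one-sentence argument is exactly your double count, distributing each crossing's $+1$ among the four incident face-corners and each edge's $-1$ between its two face-sides, while the signed vertices are kept as separate terms. Your version merely spells out the corner/side bookkeeping (degree $4$ at crossings, two sides per edge) that the paper leaves implicit.
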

\begin{proof} Each of the crossing vertices 
contributes $+1$ to the Euler characteristic
which is shared among the four faces that meet at it, 
and each edge contributes $-1$
to the Euler characteristic, which is shared between the two faces it lies on.
\end{proof}

Since we want to bound $\chi(\Sph)$ by a sum over the faces, we must decide,
for each signed vertex, where to include it in the sum.  

Suppose $v$ is a positive vertex on face $F$, and that the two crossing 
vertices of $F$ adjacent to $v$ on $F$ are $x$ and $y$.  
We say that $v$ {\it counts against}  
 $F$ if $x$ and $y$ are not incident to the same edge of $G^-$.
We make the similar definitions for negative vertices.  

\begin{lemma} \label{le:countstwice}
If $(G^+,G^-)$ is a strongly normal pair of graphs, then 
every signed vertex $v$ counts against at least two faces.  
\end{lemma}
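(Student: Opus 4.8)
The plan is to translate the statement into a combinatorial fact about a cyclic sequence read off around $v$, and to invoke strong normality only to rule out one degenerate configuration. Treating the case of a positive vertex (the negative case being symmetric), I would first fix the local picture at $v$. Since $v$ is a vertex of $G^+$ and the pair is normal, $v$ does not lie on $G^-$, so it sits in the interior of a single face $\phi$ of $G^-$. Let $g_1,\dots,g_d$ be the edges of $G^+$ at $v$ in cyclic order, and let $x_i$ be the first crossing met along $g_i$; because $\operatorname{int}(\phi)$ contains no negative edge, each $x_i$ lies on an edge $n_i$ of $\partial\phi$. The edges of $H$ incident to $v$ are exactly the terminal segments $[v,x_i]$, so the faces of $H$ at $v$ are the $d$ sectors $F_i$ between $g_i$ and $g_{i+1}$ (indices cyclic), and the two crossings adjacent to $v$ along $F_i$ are precisely $x_i$ and $x_{i+1}$. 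Hence $v$ counts against $F_i$ if and only if $n_i\ne n_{i+1}$, so the number of faces $v$ counts against equals the number of cyclic positions where the sequence $(n_1,\dots,n_d)$ changes value.

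The next step is an elementary observation about cyclic sequences: the number of positions $i$ with $n_i\ne n_{i+1}$ is $0$ when the sequence is constant, and otherwise equals the number of maximal constant blocks, which is at least $2$; in particular it is never exactly $1$. Thus it suffices to show that $(n_1,\dots,n_d)$ is not constant, i.e. that the first crossings around $v$ do not all lie on a single negative edge (note this simultaneously excludes the degenerate case $d=1$).

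This last point is where strong normality enters, and I expect it to be the main obstacle, since it is the only place the geometric hypothesis is used and it requires identifying the right positive face. Suppose for contradiction that all $x_i$ lie on one edge $n$. Since the cyclic order of the $g_i$ around $v$ matches the order of the $x_i$ along $\partial\phi$, and all the $x_i$ lie on the single arc $n$, the extreme crossings are $x_1$ and $x_d$; the outer sector $F_0$ between $g_d$ and $g_1$ is then bounded along $\partial\phi$ by the two end-segments of $n$ (the piece beyond $x_d$ and the piece beyond $x_1$), reached by going the long way around $\partial\phi$. Let $\psi_0$ be the face of $G^+$ containing $F_0$. Both end-segments bound $F_0$ and, lying on a negative edge, are contained in the interior of a single face of $G^+$, necessarily $\psi_0$; yet within $n$ they are separated by the interior crossings. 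Hence $n\cap\operatorname{int}(\psi_0)$ has at least two components, contradicting strong normality (with the roles of $G^+$ and $G^-$ interchanged). The same reasoning covers $d=1$, with $F_0$ the unique sector at $v$ and $n$ re-entering $\psi_0$ on the two sides of its single crossing $x_1$.

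Combining the three steps finishes the argument: the sequence $(n_1,\dots,n_d)$ is non-constant, so it changes value at least twice cyclically, and therefore $v$ counts against at least two faces. The parts I would treat as routine are the bijection between the sectors at $v$ and the cyclic-consecutive pairs $(g_i,g_{i+1})$, and the cyclic-sequence counting; the delicate part is verifying that the two end-segments of $n$ genuinely belong to the same positive face $\psi_0$ and are genuinely separated within $n$, which is exactly the content that strong normality rules out.
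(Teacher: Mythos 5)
Your proposal follows the same strategy as the paper's proof: list the first crossings $x_i$ along the edges $g_i$ at $v$, note that $v$ counts against the sector between $g_i$ and $g_{i+1}$ exactly when $n_i\neq n_{i+1}$, observe that a non-constant cyclic sequence must change value at least twice, and invoke strong normality only to exclude the constant case. Those first steps are correct and agree with the paper almost verbatim. The gap is in your treatment of the constant case, which is where all the real content lies. Your key assertion --- that since the cyclic order of the $g_i$ around $v$ ``matches the order of the $x_i$ along $\partial\phi$,'' the extreme crossings along $n$ are $x_1$ and $x_d$ --- is false as a topological statement. What is true (and only when $\phi$ is a disc, which needs a connectivity hypothesis you have not made) is that the cyclic order of the $g_i$ at $v$ agrees with the cyclic order of their attachment points along the boundary \emph{walk} of $\phi$; but each interior point of $n$ occurs twice on that walk, once for each side of $n$, and the arcs $[v,x_i]$ may reach $n$ from either side, because the positive edges have free far ends that $n$'s sides can be reached around. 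Hence the cyclic order at $v$ does not determine the linear order of the $x_i$ along $n$. Concretely: let $n$ be a horizontal arc with endpoints $P$ (left) and $Q$ (right), put $v$ above $n$, let $g_1,g_2,g_3$ run from $v$ down to $n$, crossing it at the first, second and fourth marked points from the left, and let $g_4$ leave $v$ in the angular gap between $g_3$ and $g_1$, travel around $Q$, and cross $n$ from below at the third point; extend each edge slightly past its crossing. This is a normal pair, each $x_i$ is the first crossing along $g_i$, all four lie on the single edge $n$, and the cyclic order at $v$ is $(g_1,g_2,g_3,g_4)$, yet the order along $n$ is $x_1,x_2,x_4,x_3$: the extremes are $x_1$ and $x_3$, whose edges are \emph{not} cyclically adjacent at $v$, so no sector at $v$ is flanked by the two end-pieces of $n$, and your identification of the face $\psi_0$ --- the step your contradiction rests on --- collapses. (A second, lesser, gap: the assertion that an end-segment, ``lying on a negative edge, is contained in the interior of a single face of $G^+$'' is a non sequitur, since it may cross positive edges not incident to $v$; you would only need its initial piece, so that one is repairable.)

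Note that the paper's own proof never tries to locate the offending face: it only asserts that a single negative edge carrying all first crossings around $v$ must leave and re-enter \emph{some} positive face, and admittedly it is terse about why. An ordering-free justification, which is what your argument would need in place of the false claim, goes as follows. Let $A_0,\dots,A_m$ be the open arcs into which its crossings with $G^+$ cut $n$; strong normality says the positive faces containing $A_0,\dots,A_m$ are pairwise distinct, so these faces, with adjacency given by consecutiveness along $n$, form a path. Each $x_i$ is a junction of this path whose two flanking faces are the two positive faces bordering $g_i$, namely the faces of the two sectors at $v$ on either side of $g_i$; distinct $x_i$ give distinct junctions. The cyclic sequence of sector faces around $v$ is therefore a closed walk in this path using $d$ distinct edges exactly once each, which is impossible because a closed walk in a tree traverses every edge an even number of times. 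Some argument of this kind is genuinely needed; the cyclic-counting part of your proof is fine, but as written the crucial step does not go through.
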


\begin{proof} 
Suppose $v$ is positive.  There must be at least two different edges of
$G^-$ which are incident to crossing vertices of $H$ at distance one from
$v$, since otherwise the single edge of $G^-$ which is incident to all the
crossing vertices at distance one from $v$ would necessarily leave and
re-enter the interior of the same $G^+$-face, contrary to our assumption
of strong normality. 
Therefore, as we cyclically traverse the
crossing vertices at distance one from $v$, which edge from $G^-$ they 
intersect must change at least twice, so $v$ counts against at least two
faces of $H$.  \end{proof}

Let 
$d(F)=c(F)+\frac 12 \textrm{ (the number of signed vertices
counting against $F$)}$.
Given the previous lemma, the following lemma is immediate:

\begin{corollary} If $(G^+,G^-)$ is a strongly normal pair of graphs, with 
$H$ connected, then 
$\chi(\Sph) \leq \sum_F d(F)$.  \end{corollary}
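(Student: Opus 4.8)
The plan is to deduce this corollary from the two preceding results by a pure double-counting argument, so there is essentially no geometric content left to supply. First I would recall the identity established in the previous lemma,
\[
\chi(\Sph)=|V(G^+)|+|V(G^-)|+\sum_F c(F),
\]
and observe that the signed vertices of $H$ are exactly the positive and negative vertices, so that their total number is precisely $|V(G^+)|+|V(G^-)|$.

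Next I would unpack the definition $d(F)=c(F)+\frac12\,(\text{number of signed vertices counting against }F)$ and sum it over all faces of $H$, obtaining
\[
\sum_F d(F)=\sum_F c(F)+\frac12\sum_F(\text{number of signed vertices counting against }F).
\]
The key step is to re-interpret the final double sum by exchanging the order of summation. The relation ``$v$ counts against $F$'' pairs signed vertices with faces, so summing over faces the number of vertices counting against each is the same as summing over signed vertices $v$ the number of faces that $v$ counts against; each incidence is tallied exactly once in either reading.

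Now I would invoke Lemma~\ref{le:countstwice}, which guarantees that every signed vertex counts against at least two faces. Hence the re-indexed sum is at least $2\bigl(|V(G^+)|+|V(G^-)|\bigr)$, and so half of it is at least $|V(G^+)|+|V(G^-)|$. Substituting this lower bound back yields
\[
\sum_F d(F)\ge \sum_F c(F)+|V(G^+)|+|V(G^-)|=\chi(\Sph),
\]
which is exactly the asserted inequality $\chi(\Sph)\le\sum_F d(F)$.

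The only point that requires any care is matching the factor $\frac12$ appearing in the definition of $d(F)$ against the ``at least two'' produced by Lemma~\ref{le:countstwice}, so that these two contributions combine to recover precisely the term $|V(G^+)|+|V(G^-)|$ from the Euler-characteristic formula. Beyond this bookkeeping there is no genuine obstacle, which is why the statement is immediate once the preceding lemma is in hand.
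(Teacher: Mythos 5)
Your proposal is correct and is precisely the argument the paper intends: the paper declares the corollary ``immediate'' from Lemma~\ref{le:countstwice}, and the implicit reasoning is exactly your double count---swapping the sum over faces of vertices counting against them into a sum over signed vertices, bounding each by two, and combining with the Euler-characteristic identity $\chi(\Sph)=|V(G^+)|+|V(G^-)|+\sum_F c(F)$. You have simply written out the bookkeeping the paper leaves to the reader.
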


We finish by showing that $d(F)\leq 0$ for all $F$, under
the assumption that the width of $(G^+,G^-)$ is greater than two.  

\begin{lemma}\label{nospec} Assume that 
$(G^+,G^-)$ is a normal pair of graphs and 
has width greater than two.  Let $F$ be a face of $H$.
Then $d(F)\leq 0$.  \end{lemma}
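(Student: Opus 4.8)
The plan is to read off all the relevant data of $F$ from its boundary walk, reduce the inequality $d(F)\le 0$ to a single combinatorial inequality, and then dispose of the one genuinely delicate case by hand. I would write the boundary of $F$ as a closed walk, counting edges and vertices with multiplicity as prescribed. Let $2m$ be the number of crossing vertices on $F$, and let $p$ and $n$ be the numbers of positive and negative vertices; the crossings split the walk into $m$ maximal positive runs and $m$ maximal negative runs, and since the walk is closed its number of edges is $e=2m+p+n$. Substituting into the definition gives
\[
c(F)=-\tfrac{m}{2}-\tfrac{p+n}{2}+1,
\]
so if $s$ denotes the number of signed vertices counting against $F$ then
\[
d(F)=-\tfrac{m}{2}-\tfrac{p+n}{2}+1+\tfrac{s}{2},
\]
and the assertion $d(F)\le 0$ is equivalent to $s\le m+(p+n)-2$.

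For the bulk of the faces this is immediate. A vertex can only count against $F$ if it actually lies on $F$, so $s\le p+n$ always holds; hence whenever $m\ge 2$ we get $s\le p+n\le m+(p+n)-2$ and we are done. I would also record that $m=0$ cannot occur: under the standing reduction every edge of $G^{+}$ and $G^{-}$ is crossed, so each signed vertex of $H$ is adjacent only to crossings. A crossing-free face would have all its boundary edges of one sign (sign changes happen only at crossings), so its vertices would all be signed vertices of that sign joined consecutively by edges of that sign, which is impossible since no edge joins two signed vertices.

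The real work is the case $m=1$, where $F$ has exactly two crossings $x,y$, one positive run and one negative run. Here the same adjacency fact shows that each run carries at most one signed vertex, so $p,n\in\{0,1\}$, and I would treat the four subcases. The subcase $p=n=0$ is excluded by \emph{normality}: the positive and negative runs would be single edge-pieces of one edge $a$ of $G^+$ and one edge $b$ of $G^-$, and $a$ and $b$ would then meet at both $x$ and $y$, contradicting that two edges cross at most once. The subcase $p=n=1$ is excluded by the hypothesis \emph{width $>2$}: at $x$ the two boundary edges of $F$ run to the positive vertex and to the negative vertex, so $x$ is incident to a terminal positive \emph{and} a terminal negative edge, which by the proof of Lemma~\ref{lemma:degree3} forces the width to be exactly two. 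In the two remaining subcases, say $p=1,n=0$, the negative run is a single deep edge joining $x$ to $y$, so $x$ and $y$ lie on a common edge of $G^-$; the lone positive vertex therefore does not count against $F$, giving $s=0=m+(p+n)-2$.

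The step I expect to be the crux — and the only place where both hypotheses enter — is this analysis of the two-crossing faces: the bigon is killed by normality, the ``quadrilateral'' $x,v,y,w$ is killed by width, and the surviving triangular faces contribute exactly $0$. Everything else is bookkeeping with the Euler-characteristic weights. The one point to stay careful about is that all counts must be taken with multiplicity along the boundary walk, since faces need not be embedded discs with simple boundary; but the identity $e=2m+p+n$ and the purely local arguments at a single crossing are insensitive to this.
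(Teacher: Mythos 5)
Your proposal is correct and takes essentially the same approach as the paper: your inequality $s\le m+(p+n)-2$ is the paper's bound $d(F)\le(4-x)/4$ in disguise (both use that only signed vertices on $F$ can count against it), and the three key steps coincide — normality kills the two-crossing bigon, width $>2$ kills the quadrilateral (the paper phrases this as two signed vertices at distance two yielding a length-two path), and the triangle contributes exactly $0$ because its signed vertex does not count against it. The only difference is organizational: the paper cases on the number of signed vertices, while you case on the number of crossings $m$.
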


\begin{proof}
Write $e$ for the number of edges of $F$.  
Write $x$ for the number of crossing vertices.
Now $d(F)\leq 1+x/4-e/2+(e-x)/2=(4-x)/4$.      
We remark that $x$ counts the number of times that the sign of
the edge changes as we travel around the boundary of $F$
and thus must be even.

It is clear from the above 
that if $F$ has four or more crossing vertices then we are done.
If $F$ has two or more signed vertices, then it must have four or more
crossing vertices.  (Signed vertices cannot be at distance one, and since
the signs of edges alternate, for signed vertices to be at distance two,
they would have to have opposite signs, and thus we would have a path
of length two.)

If $F$ has no signed vertices, then again it must have at least four crossing
vertices (since the signs of the edges alternate, there are an even
number of edges, and two edges would mean that the corresponding edges of
$G^+$ and $G^-$ cross twice).  

If $F$ has one signed vertex $v$, it may only have two crossing vertices.
In this case, however, $F$ is a triangle, and $v$ doesn't count
against $F$, so $d(F)=c(F)=0$.
\end{proof}


\end{document}